    \newtheorem{thm}{Theorem}                    
    \newtheorem{thm*}{Theorem}
    \newtheorem{lemma*}{Lemma}    
    \newtheorem{conj}{Conjecture}                 
\newcommand*{\abs}[1]{\left\lvert#1\right\rvert}   
\newcommand*{\norm}[1]{\left\Vert#1\right\Vert}    
\newcommand{\hilb}{\mathcal H}
\newcommand{\scR}{\mathscr R}
\newcommand{\Scal}[1]{\left\langle #1\right\rangle}               
\newcommand{\mb}{\mathbb}
\newcommand{\cl}{\mathcal}
\newcommand{\mbC}{\mathbb C}
\begin{document}
\title{Spectral flow inside essential spectrum V: \\ on absorbing points of coupling resonances}
\author{Nurulla Azamov}
\address{Independent scholar, Adelaide, SA, Australia}
\email{azamovnurulla@gmail.com}
 \keywords{coupling resonances}
 \subjclass[2000]{ 
     Primary 47A40;
 }
\begin{abstract} 
Let $H_0$ and $V$ be self-adjoint operators, such that $V$ admits a factorisation $V = F^*JF$ with bounded self-adjoint $J$ and $|H_0|^{1/2}$-compact $F.$
Coupling resonance functions, $r_j(z),$ of the pair $H_0$ and $V$ can be defined as $r_j(z) = - \sigma_j(z) ^{-1},$ where $\sigma_j(z)$ are eigenvalues of the compact-operator
valued holomorphic function $F(H_0-z)^{-1} F^*J.$ Taken together, the functions $r_j(z)$ form an infinite-valued holomorphic function on the resolvent set of~$H_0.$
These functions contain a lot of information about the pair $H_0, V$ (this is well-known in the case of rank one $V$).
A point $z_0$ of the resolvent set we call \emph{absorbing} if some $r_j(z)$ goes to $\infty$ as $z \to z_0$ along some half-interval.

In this note I present some partial results concerning absorbing points of coupling resonances. 
\end{abstract}
\maketitle

\bigskip 
Let $H_0$ and $V$ be self-adjoint operators on a (complex separable) Hilbert space $\hilb,$ such that $V$ admits a factorisation $V = F^*JF$ with bounded self-adjoint $J$ and $|H_0|^{1/2}$-compact $F.$
\emph{Coupling resonance functions}, $r_j(z),$ of the pair $H_0$ and $V$ can be defined as $$r_j(z) = - \sigma_j(z) ^{-1},$$ where $\sigma_j(z)$ are eigenvalues of the compact-operator
valued holomorphic function $F(H_0-z)^{-1} F^*J.$ Taken together, the functions $r_j(z)$ form an infinite-valued holomorphic function, which we denote $\scR(z) = \scR(z; H_0,V),$ on the resolvent set of~$H_0.$
These functions contain a lot of information about the pair $H_0$ and $V,$  --- this is well-known in the case of a rank one perturbation $V = \Scal{v, \cdot} v,$ which in essence is equivalent to the theory of Herglotz-Nevanlinna-Pick functions,
see e.g. \cite{Do,SiTrId2}, since in this case the function
$$
    \mb C _+ \  \ni  \ z \quad \mapsto \quad \Scal{v, (H_0 - z)^{-1} v}
$$
is Herglotz-Nevanlinna-Pick and it allows to recover the pair $H_0$ and $V$ up to a unitary equivalence, --- assuming the pair is irreducible, using the same unitary operator. 

 More information about coupling resonance functions, their applications and why they are so-called can be found in the introductions of papers \cite{AzSFIES, AzDaMN, AzDa2}.

The general case is complicated by the fact that the function $\scR(z)$ is infinite-valued, and can have potentially quite an erratic behaviour, typical for such functions. 
However, as it turns out, there are only two types of singularities given in the following theorem, see \cite[Theorem 3.1]{AzSFIESIII}:
\begin{thm}  \label{T1} Coupling resonance points can have only two types of singularities:
\begin{enumerate}
  \item continuous branching points of finite period, or
  \item absorbing points, whether isolated or not.
\end{enumerate}
Moreover, isolated absorbing points, if they exist, must have infinite period of branching. 
\end{thm}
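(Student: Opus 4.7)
The approach is to apply the analytic perturbation theory of Kato and Rellich to the compact-operator-valued holomorphic function $T(z) := F(H_0 - z)^{-1} F^* J$ in a neighbourhood of a fixed $z_0 \in \rho(H_0)$. For every nonzero $\sigma_0 \in \sigma(T(z_0))$, one encloses $\sigma_0$ by a small circle $\Gamma \subset \mbC$ disjoint from the remainder of $\sigma(T(z))$ uniformly for $z$ in some disk $D$ around $z_0$. The associated Riesz projection is holomorphic and of constant finite rank on $D$, and the eigenvalues of $T(z)$ inside $\Gamma$ form a finite algebraic cycle admitting a Puiseux expansion of some finite period $n$ in $(z - z_0)^{1/n}$. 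The corresponding resonance branches $r_j(z) = -1/\sigma_j(z)$ are likewise algebraic of finite period with finite nonzero limit $-1/\sigma_0$; these are the case (1) singularities.

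The only remaining possibility is that a branch $\sigma_j(z)$ tends to $0$ as $z \to z_0$ along some half-interval, in which case $r_j(z) \to \infty$ and $z_0$ is absorbing (case (2)). The dichotomy is exhaustive because it rests on the alternative ``$\sigma_j$ has nonzero versus zero limit at $z_0$''.

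For the final assertion, suppose for contradiction that $z_0$ is an isolated absorbing point at which some branch $\sigma_j$ vanishes with finite Puiseux period $n$. The $n$ cycle members would all vanish only at $z_0$ inside some punctured disk about $z_0$, and so could a priori be Riesz-isolated by a fixed contour. However, $F$ is $|H_0|^{1/2}$-compact and $\hilb$ is infinite-dimensional, so $T(z_0)$ is a compact operator with $0$ a non-isolated point of $\sigma(T(z_0))$: infinitely many eigenvalues of $T(z_0)$ cluster at $0$. Any fixed circle large enough to enclose all cycle members $\sigma_{j_i}(z)$, whose distance from $0$ is of order $|z-z_0|^{1/n}$, must already enclose infinitely many of these accumulating eigenvalues, so the associated Riesz projection cannot be of finite rank; any smaller circle eventually fails to enclose the cycle as $z \to z_0$. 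No fixed contour therefore separates the cycle from the rest of the spectrum, and the Kato-Rellich finite-period Puiseux description cannot be sustained --- forcing the period to be infinite. The main obstacle is to rule out more exotic $z$-dependent separations and to confirm that no smooth finite-rank projection can be built across $z_0$; I expect to handle this via the Weierstrass factorisation of a regularised Fredholm determinant of $I + T(z)$ near $z_0$, reading off the branching period from its factor structure and showing that a finite Weierstrass degree is incompatible with the assumed isolation in infinite dimensions.
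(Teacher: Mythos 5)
The paper does not give a proof of this theorem; it is cited from \cite{AzSFIESIII}, so there is no in-paper argument to compare your proposal against, and I can only assess it on its own terms. The dichotomy part of your argument --- Riesz projections around nonzero eigenvalues of the compact holomorphic family $T(z)=F(H_0-z)^{-1}F^*J$ yield finite Puiseux cycles, while eigenvalue branches tending to $0$ along some half-interval give $r_j\to\infty$ --- is a reasonable sketch, though the phrase ``nonzero versus zero limit at $z_0$'' is loose: a priori a branch need not have a limit at all. The correct dichotomy is ``$\sigma_j$ bounded away from $0$ on a punctured disk'' versus ``$\sigma_j\to 0$ along some half-interval''; it is the Riesz-projection step that then supplies convergence and algebraicity in the first case, and this deserves to be made explicit.

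The third paragraph contains a genuine gap, which you yourself flag. Showing that no \emph{fixed} contour Riesz-isolates a putative finite cycle $\sigma_1(z),\dots,\sigma_n(z)$ from the infinitely many eigenvalues of $T(z)$ clustering near $0$ rules out one familiar \emph{mechanism} for producing a Puiseux expansion, but it does not show that a finite-period branching structure is itself impossible: a finite cycle all of whose members vanish only at $z_0$ has single-valued, bounded, hence holomorphic elementary symmetric functions, and that already gives a perfectly coherent finite-period local picture with no Riesz projection around the non-isolated spectral point $0$ required. So the non-isolability of $0$ in $\sigma(T(z_0))$ does not, as stated, produce a contradiction. The proposed remedy via Weierstrass factorisation of a regularised Fredholm determinant is left as a promissory note, so the assertion that isolated absorbing points must have infinite period of branching remains unproved in your write-up. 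Within this paper's own toolkit a more plausible route is the Herglotz device of Theorem~\ref{T: no isolated absorbing points for RzV from L1}: sum $(s-r_j(z))^{-1}$ over the branches, establish that the sum is single-valued, Herglotz-valued, continuous at $z_0$ with value $0$, and invoke the open mapping theorem --- but note that argument, as given there, relies on the trace-class hypothesis, so it cannot be imported wholesale into the general setting of Theorem~\ref{T1}.
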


Recall, \cite{AzSFIESIII}, that 
a point $z_0$ of the resolvent set is called \emph{absorbing} if some coupling resonance function goes to $\infty$ as $z \to z_0$ along some half-interval, $\gamma,$ ending at $z_0.$
In this case it is not difficult to see that the point $z_0$ has a small enough neighbourhood such that if another half-interval $\gamma'$ ending at $z_0$ is homotopic to $\gamma$ in the domain of holomorphy of $\scR(z)$
then the coupling resonance function goes to infinity along $\gamma'$ too. 

Clearly, Theorem \ref{T1} significantly reduces the possible types of singularities which coupling resonance functions can have.
I believe that more is true, as specified in the following
\begin{conj}  \label{Conj1} Assume that $H_0$ and $V$ is as above. Then the pair $H_0$ and $V$ does not have absorbing points, whether isolated or not. 
\end{conj}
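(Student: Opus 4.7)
To attack Conjecture \ref{Conj1}, my plan is to argue by contradiction, combining Theorem \ref{T1} with the Herglotz--Nevanlinna structure of the compact-operator-valued function $T(z) := F(H_0-z)^{-1} F^* J$. Suppose $z_0 \in \rho(H_0)$ is an absorbing point, so some eigenvalue branch $\sigma_j(z)$ of $T(z)$ satisfies $\sigma_j(z) \to 0$ as $z \to z_0$ along some half-interval $\gamma$. By Theorem \ref{T1} I would first reduce to the case where $z_0$ is isolated, in which case $\scR$ must have infinite period of branching at $z_0$; the non-isolated case should then follow by a compactness/limit argument applied to the isolated case.

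The next step is to exploit the Herglotz--Nevanlinna structure. Decomposing $J = J_+ - J_-$ into its positive and negative spectral parts, I would consider on each sector the operator $\tilde T_{\pm}(z) := J_{\pm}^{1/2} F(H_0-z)^{-1} F^* J_{\pm}^{1/2}$. On $\mb C_+$ each $\tilde T_{\pm}$ is a compact-operator-valued Herglotz--Nevanlinna function whose non-zero eigenvalues coincide, up to a sign, with those of $T(z)$ on the appropriate $J$-subspace. On any real interval in $\rho(H_0)$, every single-valued real branch $\sigma_j(z)$ is then a monotonic real Herglotz function and therefore admits at most one real zero, which must be simple; the corresponding singularity of $r_j$ is then an ordinary simple pole, not an exotic absorbing singularity. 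Genuine multi-valued branching is confined to eigenvalue crossings inside $T$, and locally each crossing produces a Puiseux series of finite order by Kato's analytic perturbation theory.

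The crux is to rule out that infinitely many such finite-order branch points can conspire, through the global monodromy, to produce infinite period at $z_0$ together with a branch actually reaching zero along $\gamma$. The plan is to combine the $|H_0|^{1/2}$-compactness of $F$ with a quantitative continuity estimate on the singular values of $\tilde T_{\pm}(z)$ of Mirsky--Lidskii type, together with the monotonicity of the real Herglotz branches, to bound the number of branches that can enter any single monodromy cycle around $z_0$, and so contradict infinite period. The main obstacle, I expect, is precisely this combinatorial/analytic chaining step: showing that Puiseux cycles at distinct eigenvalue crossings cannot be stitched into one long monodromy orbit without violating the rigidity imposed by the Herglotz structure. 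If Schatten bounds alone prove insufficient, a trace identity of Krein--Lifshits type, applied to winding numbers of eigenvalue branches around $z_0$, may provide the additional tool needed.
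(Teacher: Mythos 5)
This is a conjecture, not a theorem: the paper explicitly does \emph{not} prove it and in fact says ``It is possible that Conjecture~\ref{Conj1} does not hold in full generality.'' What the paper actually establishes is much weaker and is stated in Theorems~\ref{T: no isolated absorbing points for RzV from L1} and~\ref{T: sexy prop, trace class case}: under the strong additional hypothesis that $F R_z(H_0) F^* J$ is \emph{trace class}, there are no isolated absorbing points, and no configuration of absorbing points forming a compact subset $K$ of an open $O \subset \mb C_+$ on which the branches are of the restricted type assumed there. The key device in the paper is completely different from yours: with the trace-class hypothesis one forms $f(z) = \sum_j (s - r_j(z))^{-1}$, which is the trace of $F R_z(H_s) F^* J$, hence a single-valued holomorphic function taking values in $\mb C_+$; at an absorbing point $f$ would have to vanish, contradicting the open mapping theorem (isolated case) or, via Rad\'o's theorem, showing that the putative set $K$ is discrete and hence empty (non-isolated case). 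Your proposal does not assume the trace-class condition and so cannot form this absolutely convergent sum.

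Beyond the mismatch with what is actually proved, your outline contains a genuine error and a genuine gap. The error: you decompose $J = J_+ - J_-$ and claim that the nonzero eigenvalues of $T(z) = F(H_0-z)^{-1}F^*J$ split, up to sign, into the eigenvalues of $\tilde T_+(z) = J_+^{1/2} F R_z F^* J_+^{1/2}$ and $\tilde T_-(z)$ ``on the appropriate $J$-subspace.'' This fails for sign-indefinite $J$: the ranges of $J_+$ and $J_-$ are not invariant under $F R_z(H_0) F^*$, so there is off-diagonal coupling, the spectrum of $T(z)$ is not the union of the spectra of $\tilde T_\pm(z)$, and the individual eigenvalue branches of $T(z)$ restricted to real $z$ need not be monotone (they can turn around, cross, and collide). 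Consequently the claim that every real branch has at most one simple zero, and hence that $r_j$ has only simple poles, does not follow. The gap: even granting the local Puiseux picture, the ``combinatorial/analytic chaining step'' you identify --- ruling out that infinitely many finite-order branch points accumulate into one infinite-period monodromy orbit reaching zero --- is not supplied; you flag it yourself as the main obstacle, and neither Mirsky--Lidskii singular-value continuity nor a Krein--Lifshits trace identity is shown to close it. As it stands the proposal neither reproduces the paper's conditional results nor establishes the unconditional conjecture.
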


It would be nice to have this proved, since then the analytic function $\scR(z)$ could have only one type of singularity: continuous branching points of finite period, which are quite manageable. 
Concerning these branching points, it is not difficult to demonstrate their existence. What is their meaning is currently being investigated, see \cite{AzDa2}. 

In this paper I present a solution to a very special case of Conjecture \ref{Conj1}. 

First I present some preliminaries. Let $s_1(T),  s_2(T), \ldots$ be $s$-numbers of a compact operator $T,$ see e.g. \cite{GK, SiTrId2} for details, and let $\lambda_1(T), \lambda_2(T), \ldots$ be eigenvalues of $T$ written in the order of decreasing magnitudes. 
The following holds for any $n=1,2,\ldots$ and $p\in(0,\infty),$ see \cite{GK}:
\begin{equation} \label{E1}
   \sum_{j=1}^n \abs{\lambda_j(T)}^p \leq    \sum_{j=1}^n [s_j(T)]^p.
\end{equation}
By $\norm{T}_1$ we denote the trace class norm of $T.$ We denote $R_z(H_0) = (H_0 - z)^{-1}.$

\begin{thm} \label{T: no isolated absorbing points for RzV from L1}
Let~$H_0$ and~$V$ be self-adjoint operators.
If~$V$ admits a factorisation $V = F^*JF$ with bounded self-adjoint~$J$ and closed $\abs{H_0}^{1/2}$-compact~$F$ 
such that $F R_z(H_0)F^*J$ is trace-class then all coupling resonance functions of the pair $(H_0,V)$ 
do not have isolated absorbing points. 
\end{thm}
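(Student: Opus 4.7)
The plan is a contradiction argument. Assume $z_0$ is an isolated absorbing point of the coupling resonances. By Theorem~\ref{T1}, the branching period at $z_0$ is then infinite: there is a branch $\sigma(z)$ of the multi-valued eigenvalue function of $T(z) := F R_z(H_0) F^* J$, defined on a punctured neighbourhood of $z_0$, whose monodromy around $z_0$ has infinite order, producing an infinite orbit $\{\sigma^{(k)}(z)\}_{k\ge 1}$ of pairwise distinct analytic eigenvalue-branches of $T(z)$, each tending to $0$ as $z \to z_0$ along an appropriate half-interval. Crucially, at every fixed $z$ in the punctured neighbourhood the values $\{\sigma^{(k)}(z)\}_k$ are distinct eigenvalues of the single operator $T(z)$, and so are constrained by the full spectral structure of $T(z)$.

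The trace-class hypothesis enters through the Fredholm determinant $d(z, \lambda) := \det(I + \lambda T(z))$. Since $z_0$ lies in the resolvent set, $T(z)$ extends holomorphically in operator norm to a disk $U$ around $z_0$; combining the resolvent identity $T(z) - T(w) = (z-w)\, F R_z(H_0) R_w(H_0) F^* J$ with the trace-class hypothesis, one bootstraps to holomorphy of $T$ in the trace norm on $U$, so that $M := \sup_{z \in U}\|T(z)\|_1 < \infty$ after shrinking $U$. Inequality~\eqref{E1} with $p=1$ then gives $\sum_j |\sigma_j(z)| \leq M$ uniformly on $U$. The determinant $d(z,\lambda)$ is jointly holomorphic on $U \times \mathbb{C}$, entire of order at most $1$ in $\lambda$ with the uniform bound $|d(z, \lambda)| \leq \exp(M|\lambda|)$, and its zeros in $\lambda$ are precisely the coupling resonances $r_j(z) = -\sigma_j(z)^{-1}$. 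Weierstrass preparation applied to $d$ at any finite $(z_0, r_0)$ on its zero variety yields a Puiseux factorisation of finite period, so all branching of $\scR$ attached to a finite limit value of $r$ is finite; the postulated infinite branching at $z_0$ must therefore manifest as an infinite family $\{r^{(k)}(z)\}_{k \geq 1}$ of distinct zero-branches of $d(z,\cdot)$ escaping to $\lambda = \infty$ as $z \to z_0$.

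To derive the contradiction I would combine the Jensen growth bound $\#\{j : |r_j(z)| \leq R\} \leq CR$ (uniform in $z$, deduced from $|d| \leq e^{M|\lambda|}$) with a Rouch\'e/Hurwitz argument: for $R$ avoiding zeros of $d(z_0,\cdot)$ on $|\lambda| = R$, the count $n(z,R)$ is locally constant in $z$ near $z_0$. Lifting to the universal cover of $U \setminus \{z_0\}$ --- on which the multi-valued branch is represented by a single-valued holomorphic function $\tilde\sigma$ --- one describes the orbit as $\sigma^{(k)}(z) = \tilde\sigma(w_0 + 2\pi k)$ for appropriate lifts $w_0$. The trace-class bound then becomes the horizontal summability $\sum_{k} 1/|\tilde\lambda(w_0 + 2\pi k)| \leq M$, with $\tilde\lambda = -1/\tilde\sigma$, while the absorbing property forces $\tilde\sigma(w) \to 0$ as $w$ tends to the puncture in suitable directions. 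The main obstacle is to combine these analytic constraints on $\tilde\sigma$ with the Jensen/Rouch\'e density bound to preclude infinitely many distinct escaping zero-branches; this is where the structural information encoded in $T(z) = F R_z(H_0) F^* J$, together with the trace-class density, must be used more delicately than the bounds above alone permit, and is the technical heart of the argument.
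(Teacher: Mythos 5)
Your proposal has a genuine gap, and you are candid about it: you assemble machinery (trace-norm holomorphy of $T(z)$, the Fredholm determinant $d(z,\lambda)$, the Jensen/Rouch\'e counting bound, lifting to the universal cover) but then say that combining these to exclude infinitely many escaping branches ``is the technical heart of the argument'' and leave it open. That missing step is not a technicality --- the Jensen bound $\#\{j:|r_j(z)|\le R\}\le CR$ is perfectly compatible with infinitely many branches $r^{(k)}(z)$ escaping to $\infty$ as $z\to z_0$, and the horizontal summability $\sum_k|\tilde\lambda(w_0+2\pi k)|^{-1}\le M$ on the universal cover is likewise not obviously contradictory. Nothing in the proposal actually forces a contradiction, so the argument does not close.

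The paper's proof uses a completely different and much shorter device which your setup misses. One first shifts: for a fixed real $s$, the numbers $(s-r_j(z))^{-1}$ are exactly the eigenvalues of the trace-class operator $F R_z(H_s)F^*J$, where $H_s=H_0+sV$. One then forms the \emph{sum over all branches}
$f(z)=\sum_j(s-r_j(z))^{-1}$.
This sum is single-valued (it is symmetric under the monodromy permuting the branches), it converges absolutely because of the trace-class hypothesis and the eigenvalue--$s$-number inequality~\eqref{E1}, and --- this is the key point you do not exploit --- after normalising so that the relevant branches $r_j$ lie in $\mb C_+$, each summand $(s-r_j(z))^{-1}$ lies in $\mb C_+$, so $f$ is a Herglotz-type function on the punctured neighbourhood. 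The trace-norm bound makes $f$ bounded near $z_0$, hence it extends holomorphically across $z_0$; the absorbing property forces $f(z_0)=0$; and then the open mapping theorem gives the contradiction, because a nonconstant holomorphic function taking a boundary value of its range ($0\in\partial\mb C_+$) in the interior of its domain would have to take values in $\mb C_-$ nearby. Thus the structural input that does the real work is the positivity of the M\"obius transform $r\mapsto(s-r)^{-1}$, not the entire-order-one growth of $d(z,\cdot)$. Your Weierstrass-preparation observation (finite branching at finite $r$-values) is correct and consistent with Theorem~\ref{T1}, but it only localises where the difficulty must sit; it does not remove it. If you want to complete a proof along your lines you would need an additional positivity or monotonicity input, and that is precisely what the Herglotz sum supplies.
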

\begin{proof} Assume that~$z_0$ is an isolated absorbing point of the necessarily  infinite-valued in a neighbourhood of $z_0$ (see Theorem \ref{T1}) resonance function 
with values $r_j(z), \ j \in \mb Z,$ in a neighbourhood of~$z_0.$ Without loss of generality, we can assume that 
$r_j$ take values in the upper half-plane. 
For a real $s,$ the numbers $(s-r_j(z))^{-1}$ are eigenvalues of the trace class operator 
$F R_z(H_s)F^*J$ and thus the series 
$$
  f(z) := \sum_{j \in \mb Z} (s-r_j(z))^{-1}
$$
converges absolutely and uniformly on compact subsets of a deleted neighbourhood of~$z_0.$ 
The function $f(z)$ is single-valued in the deleted neighbourhood and takes values in~$\mb C_+$
since all summands of the series take values in~$\mb C_+.$ Also, since~$z_0$ is an absorbing point,
the function~$f$ admits analytic continuation to~$z_0$ where it takes zero value. 
Indeed, by \eqref{E1}  for any $z \in O$ the series 
$$
      f(z) := \sum_{j \in \mb Z}  \abs{ s-r_j(z)}^{-1}
$$
is bounded by $\sup_{z \in O} \norm{F R_z(H_s)F^*J}_1,$ where $O$ is a neighbourhood of~$z_0$.
But this contradicts the openness principle, since $f\big|_O$ does not take values in~$\mb C_-.$
\end{proof}

The following theorem of T.\,Rad\'o
gives a positive answer to a very weak version of the famous Painlev\'e problem
about removable sets, see e.g. \cite{Dud}. 
Its proof can be found in \cite[\S\S 49]{Shab}. 
\begin{thm}\label{T: T. Rado} Let~$G$ be an open set in~$\mbC.$
If a continuous map $u \colon G \to \mbC$ is holomorphic outside of its 
set of zeros then it is holomorphic in~$G.$ 
\end{thm}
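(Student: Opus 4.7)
The plan is to reduce to a local question and then to analyze $\log|u|$ as a subharmonic function. The statement is local: holomorphy of $u$ holds on $G\setminus Z$ by hypothesis (where $Z:=u^{-1}(0)$) and on the interior of $Z$ since there $u\equiv 0$; only the case $z_0\in\partial Z$ needs genuine work. For such a $z_0$ I would fix a closed disk $\bar D\subset G$ centered at $z_0$, small enough that $u\not\equiv 0$ on $\bar D$ (otherwise there is nothing to prove), and set $M:=\sup_{\bar D}|u|>0$.

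Next I would observe that $v:=\log|u|$ is subharmonic on $D$: it is upper semicontinuous with $v=-\infty$ precisely on $Z$; it is harmonic on the open set $D\setminus Z$, where $u$ is a nonvanishing holomorphic function; it is bounded above by $\log M$; and the sub-mean-value inequality is trivial where $v=-\infty$ and follows from local harmonicity on small enough circles at points of $D\setminus Z$.

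The main step will be to pass from the subharmonicity of $\log|u|$, together with continuity of $u$, to holomorphy of $u$ itself. The plan is to compare $u$ with a candidate holomorphic function built from its boundary values on $\partial D$ -- for instance the Cauchy integral $F(z):=(2\pi i)^{-1}\oint_{\partial D}u(\zeta)(\zeta-z)^{-1}\,d\zeta$ -- and to apply a maximum-principle/Phragm\'en--Lindel\"of argument to suitable subharmonic functions constructed from $u$ and $F$, exploiting that $\log|u|\to-\infty$ on $Z$ to force $u\equiv F$ on $D$. Once $u=F$, holomorphy is immediate.

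The hard part will be this last comparison step, since subharmonicity is a one-sided inequality while holomorphy requires the Cauchy--Riemann equations. The vanishing of $u$ on $Z$ (not merely boundedness of $|u|$ there) is essential: the familiar example $(x,y)\mapsto\max(0,y)$ shows that continuity together with harmonicity off the zero set does not even force global harmonicity, let alone holomorphy. So the argument has to use in an essential way both the strong singularity $\log|u|=-\infty$ on $Z$ and the holomorphy -- not just harmonicity -- of $u$ on $D\setminus Z$.
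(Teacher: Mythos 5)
The paper itself offers no proof of this theorem: it is quoted as a known result with a citation to Shabat (\S\S 49), so there is no ``paper's own proof'' to compare against line by line. Your plan, however, is a recognisable sketch of the standard subharmonic-function proof of Rad\'o's theorem, and the general direction is sound: localise to a disk $\bar D\subset G$, exploit that $\log|u|$ is subharmonic on $D$ with value $-\infty$ exactly on $Z$, and run a maximum-principle comparison. You also correctly observe -- via $\max(0,y)$ -- that continuity plus harmonicity off the zero set is \emph{not} enough, so that both the $-\infty$ singularity of $\log|u|$ on $Z$ and the holomorphy (not mere harmonicity) of $u$ off $Z$ must enter.

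Two points where the plan as written would run into trouble. First, the Cauchy integral $F(z)=(2\pi i)^{-1}\oint_{\partial D}u(\zeta)(\zeta-z)^{-1}\,d\zeta$ is the wrong comparison object: the Cauchy transform of a merely continuous boundary function need not extend continuously to $\bar D$, so you cannot control $\limsup_{z\to\partial D}|u(z)-F(z)|$, which is exactly what the maximum principle requires. The correct object is the Poisson integral $h$ of $\operatorname{Re}u|_{\partial D}$: then $\phi_n^{\pm}:=\pm(\operatorname{Re}u-h)+\tfrac1n\log|u|$ is subharmonic on $D$ (harmonic on $D\setminus Z$, equal to $-\infty$ on $Z$, upper semicontinuous), and $\limsup_{z\to\partial D}\phi_n^{\pm}(z)\le\tfrac1n\log M$. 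Letting $n\to\infty$ at any fixed $z\in D\setminus Z$ gives $\operatorname{Re}u=h$ off $Z$, and continuity plus vanishing of $u$ on $Z$ extends this to all of $D$; repeating for $\pm iu$ handles $\operatorname{Im}u$. Second, this only yields that $u$ is \emph{harmonic} in $D$ -- harmonicity of real and imaginary parts does not give holomorphy (consider $\bar z$). One further step is needed and is easy once you have smoothness: $\bar\partial u$ is continuous on $D$, vanishes on the open set $D\setminus Z$ by hypothesis and on $\operatorname{int}Z$ because $u\equiv 0$ there, hence vanishes on $\partial Z\cap D$ by continuity, so $\bar\partial u\equiv 0$ on $D$. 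This is precisely the place where holomorphy, not just harmonicity, off $Z$ is used -- which you anticipated but did not pin down. So: right idea, right tools, but replace the Cauchy integral by the Poisson integral and supply the final $\bar\partial$-continuity step.
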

In my opinion, it seems to be highly unlikely that an absorbing point, if it exists, can be non-isolated. 
In any case, the set of absorbing points is small in the following sense: the set of rays emanating from  a point $z_0 \in \mbC_+$ which hit an absorbing point has Lebesgue measure zero.
This easily follows from the above mentioned properties of absorbing points, the Riemann Mapping Theorem and the Nevanlinna-Luzin-Privalov theorem. 
I present a proof of absence of non-isolated absorbing points in a very special case.
\begin{thm} \label{T: sexy prop, trace class case} Let $F R_z(H_0)F^*J$ be trace class, 
$O$ an open set in~$\mb C_+,$~$K$ a compact subset of~$O,$ and~$r(z)$
a possibly multi-valued coupling resonance function which is either regular in $O\setminus K$ 
or may have continuous branching points. If~$r(z)$ goes to $\infty,$ whenever~$z$ goes 
to a point of~$K$ along any half-interval in $O\setminus K,$ then the set~$K$ is empty. 
\end{thm}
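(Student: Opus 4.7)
The plan is to construct a single-valued, bounded, holomorphic function $f$ on $O\setminus K$ that takes values in $\mb C_+$ and extends continuously to $O$ with $f\equiv 0$ on $K$. Once such an $f$ is in hand, Theorem~\ref{T: T. Rado} will upgrade continuity to holomorphy on all of $O$, and the open mapping theorem will then produce a contradiction unless $K$ is empty.

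Following the construction from the proof of Theorem~\ref{T: no isolated absorbing points for RzV from L1}, I fix $s\in\mbR$ and set
$$
    f(z) := \sum_{j} (s - r_j(z))^{-1},
$$
summing over all branches $r_j$ of the multi-valued resonance $r$. Each $(s - r_j(z))^{-1}$ is an eigenvalue of the trace class operator $F R_z(H_s) F^* J$, so by \eqref{E1} the series converges absolutely and is bounded in modulus by $\norm{F R_z(H_s) F^* J}_1$, which is locally uniformly bounded on $O$. Summing over all branches renders $f$ invariant under the monodromy action, which merely permutes the branches coming from any continuous branching in $O\setminus K$ or any loop around $K$; thus $f$ is single-valued and holomorphic on $O\setminus K$. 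Since each summand lies in $\mb C_+$ for $z\in\mb C_+$ and $s\in\mbR$, the function $f$ takes values strictly in $\mb C_+$ throughout $O\setminus K$, and in particular is non-constant.

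Next I extend $f$ to $O$ by setting $f\equiv 0$ on $K$. The crucial step, which I expect to be the principal technical obstacle, is to verify that this extension is continuous across $K$. By the absorbing hypothesis every branch $r_j(z)\to\infty$ as $z$ approaches any point of $K$ along any half-interval in $O\setminus K$, so each summand tends to zero termwise. To pass from pointwise convergence of summands to convergence of the sum, I would use the local uniform trace-class bound together with uniform smallness of the tails $\sum_{j>N}|(s-r_j(z))^{-1}|$ on a compact neighbourhood of $K$; the latter can be extracted, via Dini's theorem, from continuity in $z$ of $\norm{F R_z(H_s) F^* J}_1$ combined with the monotone convergence of partial sums of singular values. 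This gives $f(z)\to 0$ as $z\to K$, and hence continuity of $f$ on $O$.

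With continuity established, $f$ is continuous on $O$ and holomorphic on $O\setminus K$, and since $K\subseteq f^{-1}(0)$ it is in particular holomorphic on $O\setminus f^{-1}(0)$. Theorem~\ref{T: T. Rado} then gives that $f$ is holomorphic on the whole of $O$. Now $f(O)\subseteq \overline{\mb C_+}$ while $f$ is non-constant because it takes values in the open half-plane $\mb C_+$ on $O\setminus K$; so by the open mapping theorem $f(O)$ is open. If $K$ were nonempty, however, we would have $0\in f(O)$, and no open subset of $\overline{\mb C_+}$ contains the boundary point $0$. This contradiction forces $K=\emptyset$.
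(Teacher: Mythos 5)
Your proposal follows the same core structure as the paper's proof — the same choice of $f(z) = \sum_j (s - r_j(z))^{-1}$, the same trace-class bound via~\eqref{E1}, and the same application of Rad\'o's Theorem~\ref{T: T. Rado} to upgrade continuity to holomorphy on all of~$O$ — but it diverges at the final step, and in a useful way. The paper, after establishing that $f$ is holomorphic on~$O$ and vanishes on~$K$, concludes only that the zero set of~$f$ is discrete (since $f$ is non-constant), hence~$K$ is discrete, hence each point of~$K$ is an \emph{isolated} absorbing point, and then re-invokes Theorem~\ref{T: no isolated absorbing points for RzV from L1} to rule these out. You instead close the argument directly: $f$ is non-constant, $f(O)$ is open by the open mapping theorem, $f(O)\subseteq\overline{\mb C_+}$, and yet $0\in f(O)$ if $K\neq\emptyset$ — contradiction, since no open set of $\mbC$ contained in~$\overline{\mb C_+}$ contains~$0$. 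This is cleaner and more self-contained: it applies the openness principle once, at the end, rather than indirectly through the statement of Theorem~\ref{T: no isolated absorbing points for RzV from L1}. The one place where you expend more effort than the paper is the continuity of~$f$ across~$K$: the paper only remarks that $f$ is bounded by $\sup_{z}\norm{FR_z(H_s)F^*J}_1$ and refers back to the argument in Theorem~\ref{T: no isolated absorbing points for RzV from L1}, whereas you correctly flag that uniform boundedness of the series alone does not yield continuity (termwise vanishing plus bounded $\ell^1$-mass can still fail to sum to zero), and you outline a Dini-type argument for uniform smallness of tails. That extra care is warranted; the paper is implicitly assuming the same kind of uniform control. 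To make your Dini step fully rigorous you would still need to justify that $z\mapsto\norm{FR_z(H_s)F^*J}_1$ is continuous on a neighbourhood of~$K$ and that each $s_j(\cdot)$ is continuous, which follows from trace-norm continuity of the family $z\mapsto FR_z(H_s)F^*J$ (itself deducible from the hypothesis via the resolvent identity $FR_z(H_s)F^*J = (1+sFR_z(H_0)F^*J)^{-1}FR_z(H_0)F^*J$). Overall the proposal is correct and its endgame is a genuine, and arguably preferable, variant of the paper's.
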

\begin{proof} We define a function 
$$
  f(z) := \sum_{j} (s-r_j(z))^{-1},
$$
where the sum is taken over all branches of the multi-valued resonance function in $O\setminus K.$
By the argument of the proof of Theorem~\ref{T: no isolated absorbing points for RzV from L1},
the function $f(z)$ is continuous in~$O,$ 
is holomorphic outside~$K$ and vanishes in~$K.$
Hence, by Rado's Theorem~\ref{T: T. Rado}, $f(z)$ is holomorphic in~$O.$ So, the set of zeros 
of this function is discrete, and so is~$K.$ 
Hence, by Theorem~\ref{T: no isolated absorbing points for RzV from L1} the set $K$ is empty.
\end{proof}

It is possible that Conjecture \ref{Conj1} does not hold in full generality. If so, it would be interesting to find sufficient conditions on the pair $H_0$ and $V$ under which the conjecture holds.
In particular, the case of $F R_z(H_0) F^* \in \cl L_{1,\infty},$ where $\cl L_{1,\infty}$ is the Dixmier ideal, see e.g. \cite{CoNG}, seems to be especially interesting, see \cite{AMSZ}.

\bigskip 
{\it Acknowledgements.} I thank my wife for financially supporting me during the work on this paper.

\end{document}